\newcommand {\Z}{{\mathbb Z}}
\newcommand{\A}{\mathcal A}
\newcommand{\ra}{\rightarrow}                   
\newcommand{\lra}{\longrightarrow}              
\newcommand{\colim}{\mbox{\rm co}\! \lim }
\newcommand{\hocolim}{\mbox{\rm hoco}\! \lim }
\newfont{\german}       {eufm10 at 12pt}
\DeclareMathOperator{\Hom}{Hom}
\newtheorem{thm}{Theorem}[section]
\newcounter{numerierer}
\newcounter{leer}
\newtheorem{defn}[thm]{Definition}
\newtheorem{prop}[thm]{Proposition}
\newtheorem{cor}[thm]{Corollary}
\newtheorem{lemma}[thm]{Lemma}
\theoremstyle{definition}  
\newenvironment{definition}{\begin{defn}\rm}{\end{defn}}
\newtheorem{example}[thm]{Example}
\subjclass{}
\begin{document}

\title{Singularities and Quinn spectra}
\author{Nils A.\ Baas  and Gerd  Laures}

\address{
Department of Mathematical Sciences, NTNU,7491 Trondheim, Norway\newline\indent Fakult\"at f\"ur Mathematik,  Ruhr-Universit\"at Bochum, 44780 Bochum, Germany}
\date{\today }
\begin{abstract}
We introduce singularities to Quinn spectra. 
It enables us to talk about ads with prescribed singularities and to
explicitly construct
highly structured representatives for prominent spectra like
Morava $K$-theories or for $L$-theory with singularities. We develop a spectral sequence for the computation of the associated bordism groups and investigate product structures in the    
presence of singularities.\end{abstract}
\maketitle
\section{Introduction}
\label{sec:intro}
Manifolds with cone-like singularities were introduced by D. Sullivan in
\cite{S}. The concept was reformulated by Baas in \cite{MR0346824} as manifolds
with a higher order (multilevel) decomposition of its boundary. Based on	this definition a theory of cobordisms with singularities was developed. Many interesting homology and cohomology theories were constructed based	on this theory. For example the Morava $K$-theories, the Johnson--Wilson
theories, versions of elliptic cohomology, etc.

All these theories
have played an important role in homotopy theory and algebraic 
topology during the last 30--40 years.
However,
it is surprising how many results could be obtained by just knowing
their existence, not their construction. An explicit construction, however, can help in constructing important classes or in investigating the multiplicative structure of the representing spectra. Also, in order to obtain further results it
seems to be important that the spectra are related to the original
geometric category. 

This is the goal of the present paper. The theory of  `ads'  \cite{LM06} is used to construct Quinn-spectra with 
singularities. They are symmetric spectra which come with the expected long exact sequences and a Bousfield-Kan spectral sequence for the computation of their coefficients. Moreover, it turns out that these spectra always give strict module spectra over the original Quinn spectra. In some cases 
they even have an explicit $A_\infty$ structure or sometimes, as shown in \cite{LM14}  an $E_\infty$-structure.
If the Quinn spectrum is $L$-theory the singularities spectrum   seems to provide the natural surgery obstructions for manifolds with singularities.  

This work is organized as follows: we first recall from \cite{LM06} and \cite{LM14} the main results on ad theories and Quinn spectra and give a few examples. In Section 3 we introduce the singularities in the context of ads and develop new ad theories this way. Then the exact sequence for the bordism groups are constructed. It relates the ad theories among each other in case of a sequence of singularities.  Section 4 deals with the classical example of manifolds ads. An assembly map shows that the corresponding Quinn spectrum with singularities represents the homology of manifolds with singularities of \cite{MR0346824}. Section 5 is devoted to a Bousfield-Kan type spectral sequence for ads with singularities. For complex bordism such a spectral sequence was developed by Morava  in \cite{MR546788}. 
In section 6 we discuss product structures. It is shown that the Quinn spectrum  of an ad theory with singularities is a strict module spectrum over the original Quinn spectrum. Moreover,  there always is an external product for ad-theories with singularities which has all desired properties. Internal product structures are more difficult to obtain. We show that there is a way to produce an ad theory with an internal product which comes with a map from the original ad theory. In general, this map does not induce a homotopy equivalence on Quinn spectra in general but it does so under some conditions.
\subsubsection*{Acknowledgements.}
The authors like to thank the referee for useful suggestions.

\section{Ad theories and Quinn spectra}
\noindent In this section we recall the basic notions of \cite{LM06} which
lead to spectra of Quinn type. 
\par
Recall from \cite[Definition 3.3]{LM06} that a $\Z$-graded category $\A$ is a category
with an action of $\Z/2=\{\pm1\}$ and $\Z/2$-equivariant functors 
\[ d=dim: \A \lra \Z,\;  \emptyset : \Z \lra \A\]
which satisfy $d \, \emptyset = id$. Here, $\Z$ is regarded as a poset with
the trivial action. The full subcategory of $\A$ of $n$-dimensional objects is denoted by
$\A_n$. In abuse of notation, we will often write $\emptyset$ for the  object $\emptyset_n$ of  $\A_n$.  A $k$-morphism between graded categories is a  functor which
decreases  the dimension by $k$ and strictly commutes with the involution $-1$ and $\emptyset$.
\par
Let $K$ be a ball complex in the sense of \cite{MR0413113}. We write
${\mathcal C}ell(K)$ for the category with objects in dimension $n$ the
oriented cells $(\sigma, o)$ of $K$ and the empty cell
$\emptyset_n$. There are only identity morphisms in ${\mathcal C}ell(K)_n$
and morphisms to higher dimensional cells are given by inclusions
of cells with no requirements to the orientations. 
The category  ${\mathcal
C}ell(K)$ is a graded category  with the orientation reversing
involution. Note that morphisms between ball complexes induce
morphisms on the cellular categories. Moreover, if $L$ is a
subcomplex of $K$ we can form the quotient category ${\mathcal C}ell(K,L)
$ of ${\mathcal C}ell(K)$ by identifying the cells of $L$ with the empty cells.
\par
Next we recall the definition of an ad theory from \cite[Definition 3.8]{LM06}.
\begin{definition}\label{ad property}
Let $\A$ be a category over $\Z$. A $k$-morphism from  ${\mathcal C}ell(K,L)$ to
$\A$  is called a pre $(K,L)$-ad of
degree $k$. We write $\mbox{pre}^k(K,L)$ for the
set of these pre ads.  An {\em ad theory} is an $i$-invariant 
sub functor $\mbox{ad}^k$ of $\mbox{pre}^k$ from ball complexes to sets for each $k$
with the property $\mbox{ad}^k(K,L)= \mbox{pre}^k(K,L)\cap
\mbox{ad}^k(K)$ and
which satisfies the following axioms:
\begin{itemize}
\item[{\em (pointed)}]
the pre ad which takes every oriented cell to $\emptyset$ is an ad for
every $K$
\item[{\em (full)}]
any pre $K$-ad which is isomorphic to a $K$-ad is a $K$-ad
\item[{\em (local)}]
every pre $K$-ad which restricts to a $\sigma$-ad for each cell
$\sigma$ of $K$ is a $K$-ad.
\item[{\em (gluing)}] for each subdivision $K'$ of $K$ and each
  $K'$-ad $M$ there is a $K$-ad which agrees with $M$ on each common
  subcomplex of $K$ and $K'$.
\item[{\em (cylinder)}]
there is a natural transformation 
\[ J: \mbox{ad}^n(K) \lra \mbox{ad}^n(K\times I) \]
with the property that for every $K$-ad $M$ the restriction of $J(M)$ to $K\times 0$ and to
$K\times 1 $ coincides with $M$. It takes trivial ads to trivial ones.
\item[{\em (stable)}] let
\[ \theta: {\mathcal C}ell(K_0,L_0) \lra {\mathcal C}ell (K_1,  L_1)\]
be a $k$-isomorphism with the property that it preserves all incidence numbers
$$ [ o(\sigma),o(\sigma')]=[o(\theta \sigma),o(\theta \sigma')]$$ 
(see \cite[p.82]{MR516508}.)
Then the induced map of pre ads restricts to ads:
\[ \theta^ * : \mbox{ad}^l(K_1,L_1)\lra
\mbox{ad}^{k+l}(K_0,L_0).\] 
\end{itemize}
A multiplicative  ad theory in a graded symmetric monoidal category
${\mathcal A}$ is equipped with a natural
transformation
\[ \mbox{ad}^p (K) \wedge  \mbox{ad}^q(L) \lra  \mbox{ad}^{p+q}
(K\times L)\] and 
the object $e$ in $ \mbox{ad}^0(*)$ 
which is associative and unital in the sense of \cite[Definition 3.10 and 18.4]{LM06}.  A multiplicative ad theory is called commutative if the monoidal structure of ${\mathcal A}$ extends to a permutative structure (see \cite[Definition 3.1 and 3.3]{LM14}).
In particular, there is a natural isomorphism
$$ \gamma: x \otimes y \lra (-1)^{d(x)d(y)} y \otimes x$$
for all $x,y \in {\mathcal A}$.
\end{definition}
\begin{example}\label{R}
Let $R$ be a ring with unit. Consider $R$ as the graded  category  for which 
the objects are  the elements of $R$ concentrated in dimension 0, there are only identity
morphisms and the involution is the multiplication by -1. Then there is
a multiplicative ad theory $\mbox{ad}_R$ with $K$-ads $M$ all pre $K$-ads with the
property that for all cells $\sigma \in K$ of dimension $n$
\[ \sum_{dim(\sigma')=n-1, \sigma'\subset \sigma}
   [o(\sigma),o(\sigma')] M(\sigma',o(\sigma'))=0 \]
 where $ [o(\sigma),o(\sigma')]$  is the incidence number. If $R$ is commutative then so is $\mbox{ad}_R$.
\end{example}
\begin{example}\label{manifolds}
Let $\mathcal{ST}op$ be the graded category of compact oriented topological manifolds. An ad theory over $\mathcal{ST}op$ can be defined as follows: a
pre $K$-ad $M$ is an ad if for each $\sigma'\subset \sigma$ of one dimension lower the map
$M(\sigma', o')\lra M(\sigma ,o)$ factors through an orientation preserving  map
\[ M(\sigma', o')\lra[o,o']\partial M(\sigma, o)\]
and $\partial M(\sigma, o)$ is the colimit  of $M$ restricted to $\partial
\sigma$. See \cite[Section 6]{LM06} for details. Similarly, there is an ad theory over the graded category of compact unoriented topological manifolds  $\mathcal{T}op$. For instance,  a decomposed
(oriented) manifold
in the sense of \cite{MR0346824} is a $\Delta^ n$-ad.
\end{example}
\begin{example}
Let $W$ be the standard resolution of $\Z$ by $\Z[\Z/2]$ modules. Define the objects of ${\mathcal A}$ to be the quasi-symmetric complexes, that is, in dimension $n$ we have pairs $(C;\varphi)$ where $C$ is a quasi finite complex of free abelian groups and 
$$ \varphi : W \ra C \otimes C$$
is a $\Z/2$ equivariant map which raises the degree by $n$.  The dimension increasing morphisms $f: (C;\varphi )\ra (C';\varphi')$ are the chain maps  and for equal dimension of source and target one further assumes that 
$$ (f\otimes f) \varphi = \varphi'.$$
The involution changes the sign of $\varphi$. The $K$-ads of symmetric Poincar\'e complexes are those (balanced) functors  which
\begin{enumerate}
\item
are closed, that is, for each cell $\sigma$ of $K$ the map from the cellular chain complex 
$$\mbox{cl}(\sigma )\ra \Hom(W,C \otimes C) $$
which takes  $(\tau ,o)$ to the composite
$$
W\stackrel{\varphi_{(\tau,o)}}{\lra}C_\tau\otimes C_\tau\ra C_\sigma \otimes C_\sigma
$$
is a chain map.
\item
are well behaved, that is,  each map $f_{\tau \subset \sigma}$ and
\[
\textstyle C_{\partial \sigma}=\colim_{\tau\subsetneq\sigma} \, C_\tau\to C_\sigma
\]
are a cofibrations (split injective).
\item
non degenerate, that is, the induced map
$$
H^*(\Hom (C,\Z))
\to
H_{\dim\sigma-\deg F-*}(C_\sigma/ C_{\partial \sigma}).
$$
is an isomorphism.
\end{enumerate}
\end{example}
For an ad theory the bordism groups $\Omega^n$ are obtained by
identifying two *-ads of dimension $n$ if there is an $I$-ad which
restricts to the given ones on the ends. 

\begin{thm}[ {\cite[16.1,17.9, 18.5]{LM06} \cite[1.1]{LM14}} ] \label{Mainold}
The ads form the simplexes of the spaces in a positive
$\Omega$-spectrum $Q(ad)$ in a natural way. Its coefficients are given by the bordism
groups.  If the ad theory is multiplicative then the spectrum can be given
the structure of a symmetric ring spectrum. If the ad theory is commutative then it is weakly equivalent to a commutative ring spectrum.
\end{thm} 
In the example of a commutative ring one obtains the Eilenberg-MacLane spectrum
with $R$-coefficients. In the example of oriented manifolds one
obtains a spectrum which is homotopy equivalent to the Thom spectrum. In the example of symmetric Poincar\'e complexes the spectrum coincides with the symmetric $L$-theory spectrum. More examples can be found in \cite{BLM}.

\section{Singularities}
In this section we introduce the concept of singularities to ad theories. Bordism theories of manifolds with singularities have been studied by the first author in \cite{MR0346824}. In ordinary bordism one works with closed manifolds and bordisms between them. In the case of singularities of type $P_1$ one considers manifolds $M$ with a special boundary. (One may think that these objects are the results from removing the cones over $P_1$ from closed singular manifolds of type $P_1$.)  More specifically,
the `closed' $P_1$-manifolds are those with boundary of the form $N\times P_1$ for some closed manifold $N$. A null bordism $B$ of $M$ comes with a decomposition into two boundary components which are glued along their common boundaries. One component is $M$ and the other again comes with a homeomorphism to a product with one factor $P_1$. When studying more than one singularity at once one is forced to look at further decompositions of manifolds. As we have seen before, such objects  are provided by manifold $\Delta^n$-ads. We will reconsider the bordism theory of manifolds with singularities in more detail in Section \ref{man with sing}.   

In order to generalize this concept to other ad theories, suppose we are 
given a commutative multiplicative
ad theory $\mbox{ad}$ over $\A=(\A,\otimes ,I)$. 
\begin{definition} \label{defsing}
Let  $S=(P_1,P_2, \ldots)$ be a sequence of
  $*$-ads  and set
\[ S_n=( P_1,\ldots, P_n).\]  
Let  $\A(S_n)$
be the graded category whose objects are given by the following
data:
\begin{enumerate}
\item a pre $\sigma$-ad $M_\sigma $
for each cell $\sigma \subset \{ 0,1,\ldots ,n\}$ of
$\Delta^ n$ with the property
\[ M_\sigma = \emptyset \quad \mbox{ if } 0\not\in \sigma \]
For $\sigma=\{ 0,1,\ldots ,n\}$ we simply write $M$ for the top
pre ad.  
\item an isomorphism of pre ads for each $i\not\in \sigma$
\[  f_{\sigma, i}: \partial_i M_{(\sigma,i)} \stackrel{\cong}{\lra} (-1)^{|\sigma, i|} M_{\sigma }\otimes P_i.\]  
Here, $\partial_i$ denotes the restriction to the face $\sigma$,
$(\sigma, i)$ means $\sigma\cup \{ i\}$ and 
$$|\sigma, i|= d(P_i)\sum_{s>i, s\in \sigma}d(P_s).$$ 

We demand for each object that $\partial_0 M_\sigma=\emptyset $ and for all $i,j>0$ the diagram 
\[ \xymatrix{\partial_j \partial_i M_{(\sigma,i,j)} \ar[r]
\ar[d] ^ {=}& 
\partial_j  M_{(\sigma,j)} \otimes P_i\ar[r]
&     M_{\sigma }\otimes P_j \otimes P_i \ar[d]^ {1 \otimes \gamma} 
\\
\partial_i\partial_j M_{(\sigma,i,j)}  \ar[r]
& \partial_i    M_{(\sigma,i)}\otimes P_j   \ar[r]
&  M_{\sigma}  \otimes P_i\otimes P_j }\]
commutes (after taking the appropriate signs).
\end{enumerate}
The dimension of an object $M_\cdot$ in $\A(S_n)$ is $d(M)-n$.
Morphisms are morphisms of pre ads which commute with the isomorphisms $f_{\sigma,i}$.
\end{definition}
\begin{example}\label{3.2}
For $n=0$ an object is determined by the value of the top cell of $\Delta^0$ if $\emptyset$ is initial in $\A$. Hence we have 
$$ \A( ) = \A .$$
For $n=1$ an object is a $\Delta^1$-pre ad $M=M_{\{ 0,1\}}$ and an object $N=M_{\{0\}}$ of $\A$ such that $M$ has faces $\emptyset$ and $N\otimes P_1$.
\end{example}
\begin{lemma}\label{adjunction}
For a ball complex $L$ consider the graded category ${\mathcal B}$ of $L$-pre ads $\mbox{pre}_\A (L)$ with values in $\A$. Then there is a natural equivalence of the form
\[ \mbox{pre}_\A (K\times L) \cong \mbox{pre}_{\mathcal B}(K).\]
\end{lemma}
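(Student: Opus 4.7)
The plan is to exhibit the claimed isomorphism as an instance of a Cartesian/internal-hom duality for preads, based on the fact that cells of $K\times L$ decompose as products of cells of $K$ and of $L$. First I would fix the graded-category structure on $\mathcal{B}=\mathrm{pre}_\A(L)$ explicitly. Its objects are preads $N\colon {\mathcal C}ell(L)\to\A$; the dimension of $N$ in $\mathcal{B}$ is chosen (with a sign convention) so that $\dim_\mathcal{B} N + \dim_\A N(\tau) = \dim\tau$ for every cell $\tau$ of $L$; the involution is inherited pointwise from the involution on $\A$; and the empty object $\emptyset_n$ of $\mathcal{B}$ is the pread taking every cell to the appropriate $\emptyset$ in $\A$. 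Morphisms in $\mathcal{B}$ are natural transformations compatible with this grading, so that the notion of a $k$-morphism $F\colon\mathcal{C}ell(K)\to\mathcal{B}$ makes sense.

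Next I would use the standard product structure on ball complexes: the oriented cells of $K\times L$ are exactly pairs $(\sigma\times\tau,\,o_\sigma\otimes o_\tau)$, the dimensions add, and the incidence numbers satisfy the product rule
\[ [o_{\sigma'}\otimes o_\tau,\,o_\sigma\otimes o_\tau]=[o_{\sigma'},o_\sigma],\qquad [o_{\sigma}\otimes o_{\tau'},\,o_\sigma\otimes o_\tau]=(-1)^{\dim\sigma}[o_{\tau'},o_\tau]. \]
After identifying all products involving an empty cell with the corresponding empty cell of the larger complex, this yields an identification of graded categories $\mathcal{C}ell(K)\times\mathcal{C}ell(L)\xrightarrow{\cong}\mathcal{C}ell(K\times L)$. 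Given this, I would define the forward map by sending a pre $(K\times L)$-ad $M$ of degree $k$ to the functor $\widehat{M}\colon\mathcal{C}ell(K)\to\mathcal{B}$ with
\[ \widehat{M}(\sigma,o_\sigma)\;=\;\bigl[(\tau,o_\tau)\;\mapsto\;M(\sigma\times\tau,\,o_\sigma\otimes o_\tau)\bigr], \]
extended by $\emptyset$ on the empty cells of $K$. The inverse map sends a $k$-morphism $F\colon\mathcal{C}ell(K)\to\mathcal{B}$ to the pread $(\sigma\times\tau,\,o_\sigma\otimes o_\tau)\mapsto F(\sigma,o_\sigma)(\tau,o_\tau)$. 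Both assignments are defined cellwise and so are manifestly natural in $K$, and they are mutually inverse by inspection.

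The main obstacle I expect is the orientation and sign bookkeeping: one has to check that the forward map really lands in graded functors (dimensions drop by $k$ uniformly, the involution is respected, and emptiness is preserved), and dually that a given $F$ produces a well-defined pread on $\mathcal{C}ell(K\times L)$ rather than on $\mathcal{C}ell(K)\times\mathcal{C}ell(L)$. The subtle points are the $(-1)^{\dim\sigma}$ appearing when an incidence is computed along the $L$-factor, and the compatibility of the pointwise involution on $\mathcal{B}$ with the product orientation involution on $\mathcal{C}ell(K\times L)$. Once these sign computations are carried out, the remaining verifications (behaviour on empty cells, naturality under morphisms of ball complexes $K_0\to K_1$) are routine.
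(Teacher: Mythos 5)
Your proposal is correct and follows essentially the same route as the paper: the identification of $\mathcal{C}ell(K)\times\mathcal{C}ell(L)$ (modulo empty cells and the product-orientation involution) with $\mathcal{C}ell(K\times L)$ is exactly the paper's equivalence $\mathcal{C}ell(K)\wedge_{\Z/2}\mathcal{C}ell(L)\cong\mathcal{C}ell(K\times L)$, and your explicit currying of $M$ into $\widehat{M}$ is the ``adjunction between products and functor sets'' the paper invokes. You merely spell out the grading and sign conventions (which, as you note, need the degree rather than its negative as the $\mathcal{B}$-dimension) that the paper leaves implicit; since preads carry no incidence-number condition, the product rule for incidence numbers is not actually needed here.
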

\begin{proof} We have a natural equivalences of categories over $\Z$
\[ {\mathcal C}ell (K)\wedge_{\mathbb Z/2} {\mathcal C}ell (L) \cong {\mathcal
  C}ell(K\times L).\]
  Here, the left hand side has objects pairs $((\sigma , o ),(\sigma' ,o'))$ of oriented cells which are identified with $\emptyset$ if one of the cells is empty. In addition, each such pair is identified with  the pair $((\sigma , -o ),(\sigma' ,-o'))$. 
The claim follows from the obvious adjunction between products and functor sets.
\end{proof}
\begin{prop}
Let $ \mbox{ad}/S_n(K)$ be the set of pre $K$-ads in $\A(S_n)$ which
give  $(K\times \sigma)$-ads 
in $\A$ under the adjunction of Lemma \ref{adjunction} for each cell
$\sigma$ of $\Delta^ n$. Then $\mbox{ad}/S_n$
defines an ad theory.
\end{prop}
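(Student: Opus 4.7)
The strategy is to verify each ad-theory axiom for $\mbox{ad}/S_n$ by unwinding Lemma~\ref{adjunction} and reducing to the corresponding axiom for the ambient multiplicative ad theory on $\A$ applied to the product complexes $K\times\sigma$ for cells $\sigma$ of $\Delta^n$. Concretely, a pre $K$-ad in $\A(S_n)$ is, via the adjunction, the same data as a pre $(K\times\Delta^n)$-ad $\widetilde M$ in $\A$ together with the product identifications $f_{\sigma,i}$ of Definition~\ref{defsing} (now read as natural isomorphisms of pre $K$-ads in $\A$), and membership in $\mbox{ad}/S_n(K)$ is precisely the condition that each restriction $\widetilde M|_{K\times\sigma}$ be an $\A$-ad.

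With this translation the axioms (pointed), (full), (local), and (stable) follow essentially formally. The trivial pre ad corresponds to the trivial $(K\times\Delta^n)$-pre ad, with the identifications reducing to $\emptyset\cong\emptyset\otimes P_i$; isomorphisms in $\A(S_n)$ restrict to isomorphisms of underlying pre ads in $\A$ intertwining the $f_{\sigma,i}$, so fullness in $\A$ implies fullness for us; locality is inherited cell by cell, since cells of $K\times\Delta^n$ are products of cells; and the stability axiom reduces to that of $\A$ applied to $\theta\times\mathrm{id}_{\Delta^n}$, which preserves incidence numbers because incidence numbers of product cells are products of the factor incidence numbers. For (cylinder), one defines $J$ on $\A(S_n)$-ads by applying the cylinder of $\A$ to the underlying $(K\times\Delta^n)$-ad; naturality of $J$ together with its triviality on empty ads ensures that the product identifications extend, after using the multiplicative structure to identify $J(N\otimes P_i)$ with $J(N)\otimes P_i$ via the trivial $I$-cylinder on $P_i$.

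The main obstacle is (gluing). Given a subdivision $K'$ of $K$ and a $K'$-ad $N$ in $\mbox{ad}/S_n(K')$, the natural step is to apply the gluing axiom of $\A$ to the product subdivision $K'\times\Delta^n$ of $K\times\Delta^n$ with the underlying ad $\widetilde N$, producing a $(K\times\Delta^n)$-ad $\widetilde M$ in $\A$ that agrees with $\widetilde N$ on common subcomplexes. The difficulty is that this gluing, chosen blindly, need not come equipped with product identifications $f_{\sigma,i}$ on the cells of $K$ not in $K'$. The plan is to perform the gluing inductively in the dimension of $\sigma$, starting from the top cell $\Delta^n$ and descending along faces: at each stage, the identification $f_{\sigma,i}$ is already prescribed on the common subcomplex with $K'\times\Delta^n$, and one extends it on new cells by modifying the glued ad with a cylinder (supplied by the cylinder axiom in $\A$ and the multiplicative structure) so that its $i$-th face coincides strictly with the required product $\widetilde M|_{K\times\sigma}\otimes P_i$. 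Compatibility with the coherence hexagon of Definition~\ref{defsing} then follows from the associativity and symmetry constraints of the multiplicative structure on $\A$, and the resulting $K$-ad lies in $\mbox{ad}/S_n(K)$ because each $\widetilde M|_{K\times\sigma}$ is a $(K\times\sigma)$-ad by the $\A$-gluing axiom applied to $K\times\sigma$ with subdivision $K'\times\sigma$.
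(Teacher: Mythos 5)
Your treatment of the pointed, full, local, stable and cylinder axioms is essentially the paper's own argument: everything is pushed through Lemma~\ref{adjunction} and reduced to the corresponding axiom of the ambient theory on the complexes $K\times\sigma$, with $\theta\times\mathrm{id}$ handling stability and the cylinder of $\A$ handling the cylinder axiom. The paper is just as brief as you are about the compatibility of the cylinder with the identifications $f_{\sigma,i}$, so up to that point the two proofs coincide.

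On gluing you go beyond the paper, which merely glues each $(K'\times\sigma)$-ad to a $(K\times\sigma)$-ad and declares that ``the claim follows''; you are right that the independently glued ads need not carry the structure isomorphisms $f_{\sigma,i}$ of Definition~\ref{defsing} on the cells of $K$ that are not cells of $K'$, and identifying this is a real gain in precision. However, the repair you sketch does not work as stated. The induction runs in the wrong direction: $f_{\sigma,i}$ expresses the $i$-th face of $M_{(\sigma,i)}$ as $M_\sigma\otimes P_i$, so a blindly glued top-cell ad has faces on the new cells of $K$ that need not be of product form at all, and there is no way to ``divide by $P_i$''; the construction has to proceed upward from $M_{\{0\}}$, building each $M_{(\sigma,i)}$ with its $i$-th faces prescribed by the already constructed $M_\sigma\otimes P_i$. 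And precisely that step --- gluing over the subdivision $K'$ while prescribing the value on $K\times\partial_i\sigma$ and keeping agreement with the given ad on the common subcomplexes --- is not furnished by the gluing axiom; your phrase about modifying the glued ad with a cylinder so that its face ``coincides strictly with the required product'' is exactly the assertion that requires proof, since the cylinder axiom only produces an $I$-cylinder restricting to the given ad on both ends and is not stated to commute with $-\otimes P_i$ or to allow replacing a face by a chosen isomorphic representative. So the crucial compatibility in the gluing axiom remains unproved in your write-up --- as, to be fair, it does in the paper's one-line version.
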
 
\begin{proof} The set $\mbox{ad}/S_n$ clearly is pointed and full. Suppose that
  we are given a pre $K$-ad in $\A(S_n)$ which restricts to an adjoint
  of a $\tau  \times \sigma$-ad for every $\tau \in K$ then its
  adjoint restricts to a $K\times \sigma$-ad by the locality property of Definition \ref{ad property} and hence it
  is an ad. \par Next, we check the gluing property. A subdivision
  $K'$ of $K$ defines the subdivision $K'\times \sigma$ of
  $K\times \sigma$. Hence a  $K'\times \sigma$-ad can be glued
  to a $K\times \sigma$-ad and the claim follows.\par
The cylinder $J$ of the original ad theory  takes a $K \times \sigma$-ad to a $K\times \sigma \times I $-ad and hence defines a cylinder for
  $\mbox{ad}/S_n$. \par
 Finally, we have to show the stability axiom. An incidence number preserving  $k$-isomorphism 
\[  \theta: {\mathcal C}ell(K_0,L_0) \lra {\mathcal C}ell (K_1,  L_1)\]
induces a $k$-isomorphism
\[  \theta\times id: {\mathcal C}ell(K_0\times \sigma,L_0\times \sigma ) \lra
    {\mathcal C}ell (K_1\times \sigma ,  L_1\times \sigma ).\]
Hence  for a $(K_0,L_0)$-ad$/S_n$ $M$  the ads induced by
$(\theta\times id)^ *$ of its adjoints assemble to
$\theta^ *M$.
\end{proof}
\begin{example} Let $\mbox{ad}$ be the ad theory of topological manifolds. Then the monoidal structure is the cartesian product. For $n=0$ an object of $\mbox{ad}/S_0(*)$ is  a manifold without boundary. For $n=1$ and
$S_1=(*)$ we have a manifold with an arbitrary boundary. The picture
shows a manifold with a $\Z/3$-singularity, that is, an element of
$\mbox{ad}/(P_1)(*)$ where $P_1$ consists of three points. (In other words, the local cone structure is $C\Z /3$.) In the notation of Example \ref{3.2}, the manifold $N$ consists of two points.
\begin{center}
\begin{figure}[ht]
\includegraphics[scale=0.5]{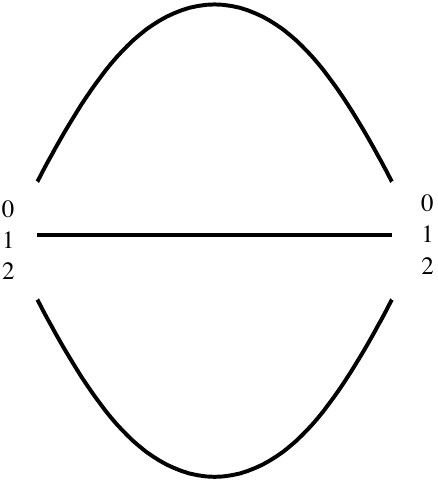}
\caption{}
\end{figure}
\end{center}
\end{example}
Next we investigate how the ad theories $\mbox{ad}/S_n$ are related
for different $n$. First observe that we have a map
\[ \mu_{P_{n+1}}: \mbox{ad}/S_n \lra \mbox{ad}/S_{n}\]
of degree $d=d (P_{n+1})$ which multiplies the ads by
$P_{n+1}$ from the right. Furthermore, there is a map of degree -1
\[ \pi: \mbox{ad}/S_{n} \lra \mbox{ad}/{S_{n+1}}\]
which comes from considering an object of $\A(S_n)$ as an object of
$\A(S_{n+1})$ by 
$$\pi M_\sigma=\left \{ \begin{array}{ll} 
M_{\sigma \setminus \{n+1\} } & \mbox{ if }  (n+1)\in \sigma\\
\emptyset &\mbox{ else}
\end{array}\right. .$$
This certainly defines a pre $K$-ad
$\pi (M)$ over $S_{n+1}$  for each $K$-ad $M$ over $S_n$.
\begin{lemma}
$\pi(M)$ is an ad.
\end{lemma}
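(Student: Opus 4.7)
The plan is to appeal directly to the defining condition of $\mbox{ad}/S_{n+1}$: one must show, for each cell $\sigma$ of $\Delta^{n+1}$, that the adjoint of $\pi(M)$ under Lemma~\ref{adjunction} is a $(K\times\sigma)$-ad in $\A$. Cells $\sigma\subseteq\Delta^{n+1}$ split naturally into two groups according to whether $n+1\in\sigma$, and I will handle them separately.

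For cells $\sigma$ not containing $n+1$, the defining convention $\pi(M)_\tau=\emptyset$ whenever $n+1\notin\tau$ makes every relevant value of $\pi(M)$ empty, so the adjoint $(K\times\sigma)$-pre-ad sends every oriented cell to $\emptyset$. This is an ad by the \emph{pointed} axiom.

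For cells $\sigma = \sigma'\cup\{n+1\}$ with $\sigma'\subseteq\{0,\dots,n\}$, the construction of $\pi(M)$ is arranged so that its $\sigma$-data is built from the $\sigma'$-data of $M$, glued trivially along the face at $n+1$. Under the adjunction of Lemma~\ref{adjunction}, this translates into the following: the adjoint $(K\times\sigma)$-pre-ad agrees on the subcomplex $K\times\sigma'$ with the adjoint of $M$ in its $\sigma'$-slot, and is empty on all remaining cells of $K\times\sigma$ (those whose $\Delta^{n+1}$-component properly contains $n+1$). The first piece is a $(K\times\sigma')$-ad by the hypothesis $M\in\mbox{ad}/S_n(K)$; the second is trivial, hence an ad by the pointed axiom. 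By the \emph{locality} axiom, these fit together into a single $(K\times\sigma)$-ad in $\A$.

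The one point to execute with care, and the main obstacle, is checking that the face-isomorphisms $f_{\sigma,i}$ of Definition~\ref{defsing}(ii) cohere with the adjunction for $\pi(M)$. The key case is $f_{\sigma',n+1}\colon \partial_{n+1}\pi(M)_\sigma\cong\pi(M)_{\sigma'}\otimes P_{n+1}$, which here is simply the canonical identification $\emptyset\cong\emptyset\otimes P_{n+1}$; and the coherence square of Definition~\ref{defsing}(ii) reduces, on the nonempty slots (those with $n+1$ as one of the added indices), to the corresponding square for $M$ over $S_n$, while the remaining squares involve only empty objects and commute trivially. No axiom beyond \emph{pointed} and \emph{local} is needed; in particular neither stability nor the cylinder axiom enters the argument.
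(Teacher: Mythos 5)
Your proof gets the construction of $\pi(M)$ backwards, and this is fatal. For $\sigma=\sigma'\cup\{n+1\}$ the structure isomorphism of Definition~\ref{defsing}(ii) forces
$\partial_{n+1}\pi(M)_{\sigma}\cong \pi(M)_{\sigma'}\otimes P_{n+1}=\emptyset\otimes P_{n+1}=\emptyset$,
since by definition $\pi(M)_{\sigma'}=\emptyset$. So the face $K\times\sigma'$ must carry the \emph{empty} object, while the nonempty data sits on the cells of $K\times\sigma$ whose $\Delta^{n+1}$-component contains $n+1$: the cell $\tau\cup\{n+1\}$ carries the value that $M_{\sigma'}$ had on $\tau$, i.e.\ the same data pushed up by one dimension (this shift is exactly why $\pi$ has degree $-1$). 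Your assignment (the adjoint of $M$ on $K\times\sigma'$, empty on the cells containing $n+1$) violates the isomorphisms $f_{\sigma',n+1}$, so it is not even a pre-$K$-ad in $\A(S_{n+1})$; and in the manifold theory of Example~\ref{manifolds} it is visibly not an ad, since a cell with empty value would have a nonempty colimit over its boundary.

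Independently of that misreading, the ``pointed $+$ local, no stability needed'' strategy cannot work, and your explicit disclaimer of the stability axiom is precisely where the content of the lemma lies. Locality is a cell-by-cell condition on restrictions to \emph{closed} cells: on a top cell $\tau\times\sigma$ the would-be empty part and the $M$-part meet along the boundary, so you cannot certify them separately and ``glue by locality''. With the correct description of $\pi(M)$, the restriction to a cell containing $n+1$ is the pullback of the corresponding ad of $M$ along the collapse
${\mathcal C}ell(\Delta^{n+1})\to{\mathcal C}ell(\Delta^{n+1},\{n+1\}\cup\partial_{n+1}\Delta^{n+1})\cong{\mathcal C}ell(\Delta^{n})$,
multiplied with ${\mathcal C}ell(K)$; this is a degree-one, incidence-number-preserving morphism of cell categories, and the assertion that pulling back an ad along it again yields an ad is exactly the stability axiom --- this is the paper's entire argument (together with pointedness for the cells not containing $n+1$). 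Pointedness and locality alone cannot produce an ad whose data lives on cells of shifted dimension, so the proposal has a genuine gap rather than an alternative route.
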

\begin{proof} We only check the top cell $\sigma = \{0,1,\ldots,n\}$. The
  other cells are similar. The adjoint of  $M$ is a  $K\times
  \Delta^ n$-ad. The 1-isomorphism of graded categories
\[  {\mathcal C}ell(\Delta^ {n+1},
 \{ n+1\} \cup \partial_{n+1} \Delta^ {n+1})\stackrel{\cong}{\lra}  {\mathcal
   C}ell(\Delta^ n ) \]
can be multiplied with $ {\mathcal
  C}ell(K)$ and hence gives the desired ad by the stability  axiom.
\end{proof}
Finally, we have a map 
\[ \delta : \mbox{ad}/S_{n+1}\lra \mbox{ad}/S_n.\]
It takes a $K$-ad $M$ over $S_{n+1}$ to the $K$-ad over $S_n$ given by the formula
\[ \delta (M) (\sigma, o) =  M(\sigma,o)_{|\{ 0,1,\ldots ,n\}}.\]  
\begin{thm}\label{exact}
Let $\Omega^ {S_n}_*$ be the bordism group of the ad theory
$\mbox{ad}/S_n$. Then the sequence
\[ \ldots \stackrel{\delta_*}{\lra} \Omega^ {S_n}_*\stackrel{{\mu_{P_{n+1}}}_*}{\lra} \Omega^
   {S_n}_{*+d}\stackrel{\pi_*}{\lra }\Omega^ {S_{n+1}}_{*+d-1}\stackrel{\delta_*}{\lra}
    \Omega^ {S_n}_{*-1}\stackrel{{\mu_{P_{n+1}}}_*}{\lra}
   \ldots 
\]
is exact. 
\end{thm}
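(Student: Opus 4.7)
The plan is to prove exactness by the standard Bockstein pattern: for each of the three positions, first verify that the composite of the two adjacent maps is zero, then construct a witness in the domain out of any null-bordism of the image.

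Vanishing of composites. The identity $\delta_*\pi_* = 0$ holds on the nose: by definition $\pi(N)_\sigma = \emptyset$ whenever $n+1 \notin \sigma$, and $\delta$ restricts to exactly those faces. For $\mu_*\delta_* = 0$, the top pre-ad $M_{\Delta^{n+1}}$ of an $S_{n+1}$-ad $M$ itself supplies the null-bordism: the isomorphism $f_{\Delta^n,n+1}: \partial_{n+1}M_{\Delta^{n+1}} \cong \delta(M)\otimes P_{n+1}$ exhibits $M_{\Delta^{n+1}}$, packaged with its remaining face data, as an $S_n$-ad with boundary $\delta(M)\otimes P_{n+1}$. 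For $\pi_*\mu_* = 0$, one interprets $N$ as supplying an $S_{n+1}$-ad null-bordism of $\pi(N\otimes P_{n+1})$, using the freedom in the $(n+1)$-direction granted by the newly adjoined singularity.

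Exactness. If $[N \otimes P_{n+1}] = 0$ in $\Omega^{S_n}_*$, choose an $S_n$-nullbordism $V$ of $N\otimes P_{n+1}$ and assemble an $S_{n+1}$-ad $M$ by declaring $M_{\Delta^n} := N$ and $M_{\Delta^{n+1}} := V$, with $f_{\Delta^n,n+1}$ taken to be the identification $\partial V \cong N\otimes P_{n+1}$ coming from $V$, and the remaining face data inherited from the $S_n$-ad structure of $V$. By construction $\delta(M) = N$. If $\pi_*[N] = 0$, choose an $I$-ad $W$ over $S_{n+1}$ null-bordering $\pi(N)$; reading off the $(n+1)$-face structure of $W$ produces an object $N' \in \mathrm{ad}/S_n$ for which $W$ is simultaneously an $S_n$-bordism from $N$ to $N'\otimes P_{n+1}$, so that $[N] = \mu_*[N']$. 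Dually, if $\delta_*[M] = 0$ via an $S_n$-ad $V$ with $\partial V = \delta(M)$, then $M$ can be glued to $V\otimes P_{n+1}$ along their common face (using the $f_{\Delta^n,n+1}$ of $M$) to form an $S_{n+1}$-bordism from $M$ to $\pi(N')$, where $N'$ is the other boundary component of $V$.

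The main obstacle is not conceptual but bookkeeping: in each of the three exactness steps one must verify that the assembled object satisfies the hexagonal compatibility diagram of Definition~\ref{defsing} on every pair of indices, and that the resulting pre-$K$-ad in $\A(S_{n\pm 1})$ satisfies the $\mathrm{ad}/S_{n\pm 1}$ requirement of restricting cellwise to an ad in $\A$. This is a diagram chase that uses only the isomorphisms $f_{\sigma,i}$ of the given pieces together with the locality, gluing and stability axioms of the underlying ad theory; no new ingredient beyond the singularity structure is needed.
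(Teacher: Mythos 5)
The paper gives no details here --- it simply cites Baas \cite{MR0346824}, 3.2 --- and your overall plan (check the three composites vanish, then build witnesses from null-bordisms) is exactly that classical Bockstein/Baas--Sullivan argument, so the route is the right one. Your treatment of $\delta_*\pi_*=0$, of $\mu_*\delta_*=0$, and of exactness at the two copies of $\Omega^{S_n}_*$ is correct in outline: in each case the real content is the re-indexing of $\Delta^{n+1}$-shaped data as $I$-parametrized data in $\mathcal{A}(S_n)$ (via collapses such as ${\mathcal C}ell(\Delta^{n+1})\to {\mathcal C}ell(\Delta^{n+1},\{n+1\}\cup\partial_{n+1}\Delta^{n+1})$ together with the stability, gluing and cylinder axioms), which you correctly flag as bookkeeping of the same kind the paper performs when proving that $\pi(M)$ is an ad.

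The step $\ker\delta_*\subseteq\mathrm{im}\,\pi_*$, however, is genuinely broken as written. You take $V$ with $\partial V=\delta(M)$, i.e.\ a null-bordism, so its ``other boundary component'' is $\emptyset$; with your definition $N'$ would be empty and the conclusion would be $[M]=0$, which is neither true in general nor what is needed. Moreover, the glued object $M_{\Delta^{n+1}}\cup_{f_{\Delta^n,n+1}}(V\otimes P_{n+1})$ is a \emph{closed} $S_n$-object --- it is the correct candidate for $N'$ itself --- and, being closed, it cannot serve as the bordism from $M$ to $\pi(N')$. What is missing is the construction of the $I$-ad over $S_{n+1}$ with ends $M$ and $\pi(N')$: one must take the cylinder $J(M)$ and cap the $P_{n+1}$-singular face at one end by $V\otimes P_{n+1}$ (geometrically $M_{\mathrm{top}}\times I$ glued to $V\otimes P_{n+1}$ along $\delta(M)\otimes P_{n+1}\times\{1\}$), the corner identifications being handled by the gluing and stability axioms; this is precisely the corner argument in Baas's proof of exactness at the Bockstein spot. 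Until that cylinder-and-cap bordism is actually produced, the third exactness statement is unproved.
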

\begin{proof}
The proof is essentially the same as in  \cite[Theorem 3.2]{MR0346824}. 
\end{proof}
\begin{example}
Consider the sequence $S=(\emptyset , \emptyset,\ldots )$. Using the
suspension axiom it is not hard to see that 
$\mbox{ad}/S_n$ consists of $n+1$ copies of the original Quinn spectrum. Hence the above exact sequence consists of short
split exact sequences.    
\end{example}
\begin{example}
Let $R$ be a ring and suppose
$x\in R$ is a non zero divisor. Consider the ad theory of Example
\ref{R}.
Then the maps of spectra induced by $\mu_x, \pi$ and $\delta$
corresponds to the Bockstein exact sequence in singular homology.
\end{example}

\section{Example: Manifolds with singularities and assemblies}\label{man with sing}
In this section we look at the ad theory of compact manifolds. For
simplicity we restrict our attention to the unoriented topological
case. It will then be clear how to do other cases of bordism theories.\par
We fix a sequence $S$ of closed manifolds and write $Q[X]$ for the
Quinn spectrum of $\mbox{ad}[X]/S_n$. Here, $X$ is a topological space
and $\mbox{ad}[X]$ is defined as in example \ref{manifolds} with
singular manifolds in $X$, that is, manifolds equipped with a continuous map to $X$. In the following, we call a simplicial set without the data of degeneracies a `semi simplicial set' (another  name in the literature is `$\Delta$-set'.)\par
\begin{prop}
Suppose $F$ is a functor from semi simplicial sets to the category of symmetric spectra which sends homotopy equivalences to stable equivalences. Then there
is a natural transformation in the homotopy category 
\[  F[*]\wedge |X|_+ \lra F [X]\]
which is the obvious equivalence if  $X$ is a point.
\end{prop}
\begin{proof} Some versions of the desired transformation are certainly known and run under the name assembly map (see for example the discussion in \cite{MR1388318}). In this simple form it can be obtained as follows:
for a
semi simplicial set
$X$ we have the natural homotopy equivalence
\[   \mbox{hoco}\! \lim_{\!\!\!\!\!\!\!\!\!\!\!\! \Delta^ n\ra
  X}F[\Delta^ n ]\lra  \mbox{hoco}\! \lim_{\!\!\!\!\!\!\!\!\!\!\!\!
  \Delta^ n\ra X}F[*]\cong  \mbox{co}\! \! \lim_{\!\!\!\!\!\!\!
  \Delta^ n\ra X}F[*]\wedge |\Delta^ n|_+ \cong  F[*]\wedge |X|_+\]
whose homotopy inverse can be composed with the map 
\[  \mbox{hoco}\!  \lim_{\!\!\!\!\!\!\!\!\!\!\!\! \! \! \sigma: \Delta^ n\ra
  X}F[\Delta^ n]
\lra   \mbox{co}\!\! \!  \! \lim_{\!\!\!\!\!\!\!
 \sigma:  \Delta^ n\ra X}F[\Delta^ n]\stackrel{(F[\sigma])}{\lra}
F[X].
\] 
\end{proof}
\begin{thm}
The spectrum $Q=Q[*]$ represents the homology theory of manifolds with
singularities of \cite{MR0346824}.
\end{thm}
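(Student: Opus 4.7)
The plan is to exploit the assembly map supplied by the preceding proposition, to reduce the theorem to an identification of $\pi_*(Q[X])$ with the Baas bordism group $\Omega^{S_n}_*(X)$, and then to show that the assembly is a natural weak equivalence by a standard homology-theory comparison argument.

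First I would unwind the definition of $\mbox{ad}[X]/S_n$ in combination with the adjunction of Lemma \ref{adjunction}. A $*$-ad in $\mbox{ad}[X]/S_n$ amounts to a singular manifold $f\colon M \ra X$ carrying a compatible system of codimension-$i$ strata of singularity type $P_i$, where the isomorphisms $f_{\sigma,i}$ of Definition \ref{defsing} record the product splittings of a neighbourhood of each stratum (just as in Example \ref{manifolds}). An $I$-ad provides a singular bordism of such structures, so $\pi_*(Q[X])$ coincides by inspection with the Baas group $\Omega^{S_n}_*(X)$ of \cite{MR0346824}.

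Second, I would verify the hypothesis of the assembly proposition, namely that $X \mapsto \pi_*(Q[X])$ is a homotopy invariant on semi simplicial sets. This is the standard bordism argument: a homotopy $h\colon X \times I \ra Y$ converts a singular $S_n$-manifold in $X$ into a singular $S_n$-bordism in $Y$, since the singularity structure lives intrinsically on $M$ and is untouched by varying the reference map. Consequently the proposition produces the assembly map $Q[*] \wedge X_+ \ra Q[X]$, which is tautologically the identity at $X=*$.

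The main obstacle is to show that this assembly map is a weak equivalence for all CW complexes, and I expect the Mayer--Vietoris step to be where the work lies. My plan is to check that both sides define generalized homology theories so that agreement at a point forces agreement everywhere. Homotopy invariance is handled above; for excision of the right hand functor I would combine the gluing axiom for $\mbox{ad}[X]/S_n$ with a transversality and subdivision argument that breaks a singular ad in $X$ into pieces lying over a cover $X = A \cup B$ and reglued along their intersection, noting that the singularity data are preserved by such subdivisions because they are imposed fibrewise on $M$. Once both functors are known to be generalized homology theories, a standard five-lemma induction along the skeletal filtration extends the isomorphism from a point to all CW complexes, completing the identification.
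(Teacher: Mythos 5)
Your proposal follows essentially the same strategy as the paper: identify the coefficients, i.e.\ show that a $*$-ad in $\mathrm{ad}[X]/S_n$ is exactly a closed singular $S_n$-manifold in $X$ and an $I$-ad is a singular $S_n$-bordism, deduce homotopy invariance so that the assembly map of the preceding proposition exists, and then conclude by comparing homology theories, using that both sides agree at a point. The one place where you diverge is the step you yourself flag as ``where the work lies'': you propose to establish excision/Mayer--Vietoris for $X\mapsto\pi_*(Q[X])$ from scratch, by a transversality-and-subdivision argument inside the ad framework. The paper does not do this and does not need to: once $\pi_*(Q[X])$ is identified with the Baas bordism group of singular $S_n$-manifolds in $X$, the fact that this functor (with its boundary operator) is a generalized homology theory is already known from \cite{MR0346824}, so the homology-theory property is simply transported across the isomorphism. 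Your route would in effect re-derive Baas's theorem, and doing transversality directly for unoriented \emph{topological} manifolds with singularities is considerably more delicate than your sketch suggests; it is legitimate but unnecessary. Apart from this, your argument (homotopy invariance via mapping cylinders of reference maps, naturality of assembly, agreement at a point, five-lemma/skeletal induction in place of the comparison theorem quoted in the paper) matches the paper's proof.
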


\begin{proof}
We first show that the bordism groups of  $\mbox{ad}/S_n$ are
naturally equivalent to the bordism groups of manifolds with
singularities $S_n$. For that, recall that a *-ad in
$\mbox{ad}/S_n$ consists of $\sigma$-ads $M_\sigma$ with
$M_\sigma=\emptyset$ if $0\not\in \sigma$ and a system of compatible
isomorphisms
\[ \partial _i M_{(\sigma ,i)} \cong (-1)^{|\sigma ,i|} M_\sigma \times P_i.\] 
Hence, it defines a closed $S_n$-manifold and a closed
$S_n$-manifold gives a $*$-ad.
A null bordism $B$ is a family of $I\times
\sigma$-ads with one end empty and the other is the bounding
object $M$.
The situation is illustrated for $n=1$ in Figure \ref{half egg}.
\begin{center}
\begin{figure}[ht]
\includegraphics[scale=0.5]{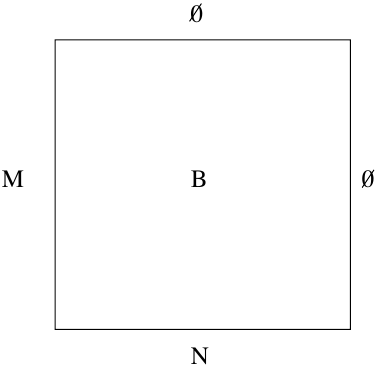}
\caption{}\label{half egg}
\end{figure} 
\end{center}
Here, $B$ and $M$ are objects of ${\mathcal T}op(P_1)$ and there are homeomorphisms
\begin{eqnarray*}
\partial_1B&\cong& N\times P_1\\
\partial M&\cong & \partial N\, \times P_1\\
\partial B&\cong & M\cup_{\partial N\, \times P_1}N\times P_1.
\end{eqnarray*}  
For example in the case of  $\Z/3$-manifold $M$ considered
earlier, a null bordism is pictured in Figure \ref{3null}. Here, $N$ is a horizontal arc.
\begin{center}
\begin{figure}[ht]
\includegraphics[scale=0.28]{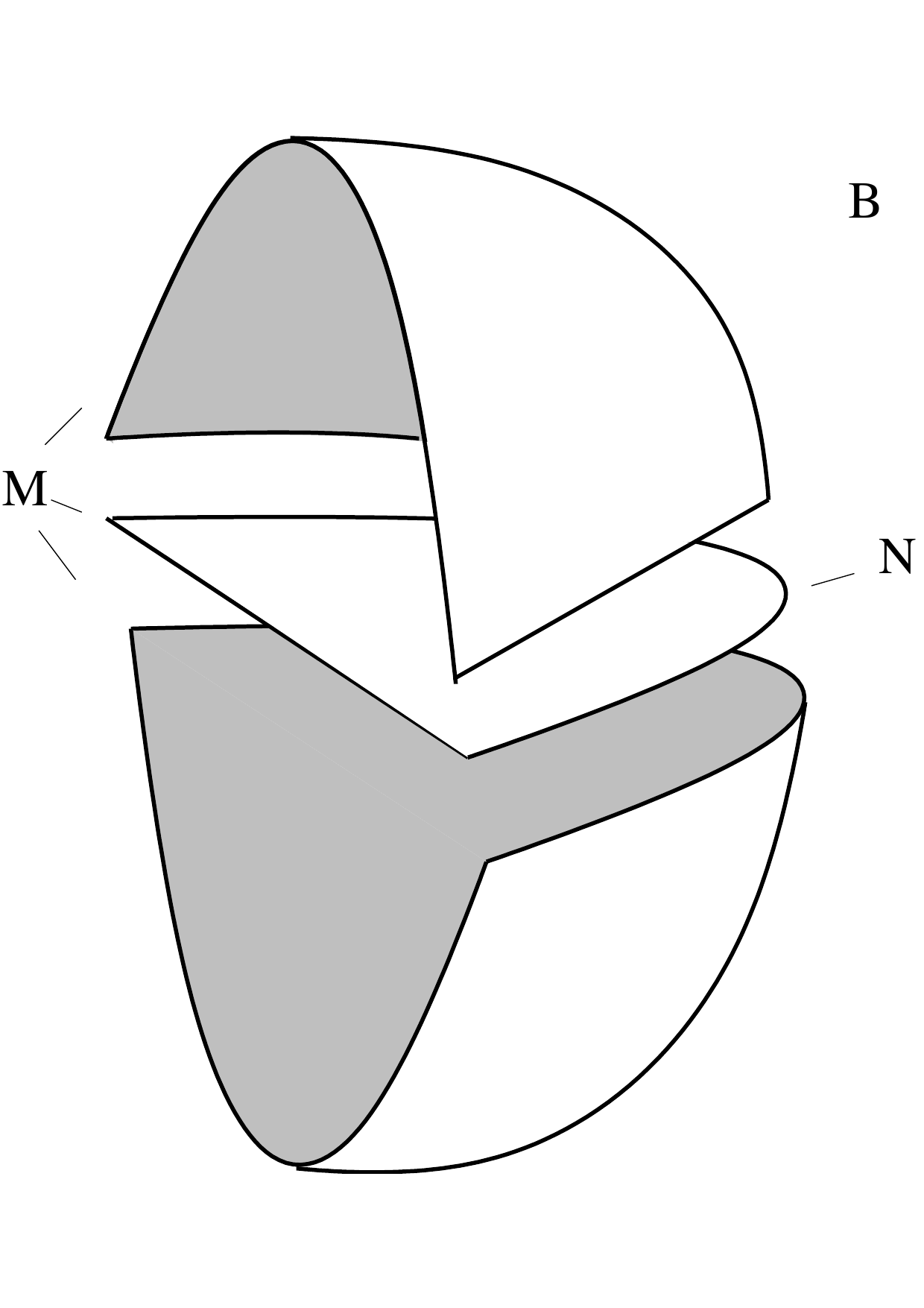}
\caption{}\label{3null}
\end{figure}
\end{center}
This
shows that the bordism groups coincide. The same argumentation shows that the bordism groups  of the ad theory  $\mbox{ad}[X]/S_n$ coincide with the bordism groups of manifolds with singularities in $X$.
This implies that $Q$ is a
homotopy invariant functor and hence the assembly map is well defined by the preceding proposition. \par
Finally, we have to show that the assembly map is a homotopy
equivalence. Using the fact that bordism of singular $S_n$-manifolds
defines a homology theory we know that the functor
\[ (X,Y) \mapsto \pi_*(Q[X],Q[Y])\] together with the boundary
operator defines a homology theory as well. Thus the assembly map
defines a natural transformation between homology theories and is an
isomorphism for a point. Thus the claim follows from the comparison
theorem between homology theories. 
\end{proof}

\section{A Bousfield-Kan spectral sequence}
The exact sequences of the $\mbox{ad}/S_n$-bordism groups for different $n$
are part of a spectral sequence of Bousfield-Kan type. In the case of
classical complex bordism it first has been developed in
\cite{MR546788}.
\par
Fix a commutative multiplicative ad theory $\mbox{ad}$ over $\mathcal{A}$ and a sequence of *-ads $S=(P_1,P_2,  \ldots, )$. We will assume that $\emptyset$ is initial in $\mathcal{A}$. For a finite set $T=\{ t_1,t_2,\ldots, t_n\}$ of natural numbers with $t_i\leq t_{i+1}$ for all $i$
let $$S_T=(P_{t_1},\ldots ,P_{t_n})$$ be the subsequence of $S$ indexed by $T$.

For each $T$ we are going to define the graded category $\A \left< S_T\right>$. The notation is taken from the theory of manifolds with faces \cite[Section 2.1]{MR1781277} and should not be confused with $\A (S_T)$.  The objects of $\A \left< S_T\right>$ consists of the following data:
\begin{enumerate}
\item a  pre $*$-ad $M_\sigma $
for each cell $\sigma \subset \{ 0,1,\ldots ,n\}$ of
$\Delta^ n$ with the property
\[ M_\sigma = \emptyset \quad \mbox{ if } 0\not\in \sigma \]

\item isomorphisms for each $i\not\in \sigma$
\[  f_{\sigma, i}:  M_{(\sigma,i)} \stackrel{\cong}{\lra}  (-1)^{|\sigma ,i|} M_{\sigma }\otimes P_{t_i} \]  
which are compatible with the face maps.
Moreover,  $\partial_0 M=\emptyset $ and for all $i,j>0$ the diagram 
\[ \xymatrix{M_{(\sigma,i,j)} \ar[r]
\ar[d] ^ {=}& 
 M_{(\sigma,j)} \otimes P_{t_i}\ar[r]
&     M_{\sigma }\otimes P_{t_j} \otimes P_{t_i} \ar[d]^ {1 \otimes \gamma} 
\\
M_{(\sigma,i,j)}  \ar[r]
&    M_{(\sigma,i)}\otimes P_{t_j}   \ar[r]
&  M_{\sigma}  \otimes P_{t_i}\otimes P_{t_j} }\]
commutes (with the appropriate signs).
\end{enumerate}

\begin{example}
The category $\A \left< \{ 1 \} \right>$ consists of $*$-ads $M_\sigma$ for $\sigma \in \Delta^1$ together with isomorphisms 
$$ M_{\{ 0,1\}} \cong M_{\{ 0\} }\otimes P_1$$
and $M_{ \{ 1 \} } =M_\emptyset =\emptyset$. An object of $\A \left< \{ 1,2\}\right>$ is a collection of $*$-ads $M_\sigma$ for $\sigma \in \Delta^2$ together with isomorphisms
\begin{eqnarray*}
 M_{\{ 0,1,2\} }&\cong & (-1)^{d(P_1)d(P_2)}M_{\{ 0,2\} } \otimes P_{1}\\
 M_{\{ 0,1,2\} }&\cong  & M_{\{ 0,1\} } \otimes P_{2}\\
 M_{\{ 0,1\} }&\cong &  M_{\{ 0\} } \otimes P_{1}\\
 M_{\{ 0,2\} }&\cong  & M_{\{ 0\} } \otimes P_{2}\\
  M_\sigma&=&\emptyset \qquad\mbox{ if } 0\not\in \sigma
 \end{eqnarray*}
which let the diagram above commute. In particular, we have a distinguished isomorphism
$$  M \cong  M_{\{ 0\} } \otimes P_{1}\otimes P_2.$$
\end{example}
Let $\delta^ k S_T$ be
the sequence obtained from $S_T$ with the $k$th entry omitted. 
 Consider the cubical diagram of graded categories
with vertices $\A\left< S_T \right> $ and face functors 
\[ \slashed{\partial} _k:  \A\left< S_T\right>  \lra  \A\left< \delta^ k S_T \right> \]
given by 
\[\slashed{\partial}_k (M_\cdot)_\sigma  =(-1)^{|\sigma ,k|^-} M_{(\sigma,k)}\]
with
$$ |\sigma ,k|^-=d(P_k)\sum_{0<s<k,s\in \sigma}d(P_s)$$
and  isomorphisms $(-1)^{|\sigma ,k|^-}f_{((\sigma,k),i)}$.
\begin{example}
Explicitly, the boundary functor 
$$ \slashed{\partial}_2:  \A \left< \{ 1,2 \} \right> \lra \A \left< \{ 1 \} \right>$$
is given by
$$ (\slashed{\partial}_2M)_{\{ 0,1\} } = (-1)^{d(P_1)d(P_2)} M_{\{ 0,1,2\} } \cong M_{ \{ 0,2\} } \otimes P_1 = 
(\slashed{\partial}_2M)_{\{ 0 \} }\otimes P_1.$$
\end{example}
For each set $T$ there is an ad theory $\mbox{ad}\left<  S_T \right> $ over $\A \left< S_T\right>$ as follows: the $K$-ads are those pre ads $M$ over $\A \left< S_T\right>$ which are cell-wise 
$K$-ads in $\A$, that is, for each cell $\sigma$ of $\Delta^n$ the functor $M_\sigma$ from ${\mathcal C}ell(K)$ to $\A$ is a $K$-ad. 

\begin{lemma}
\begin{enumerate}
\item
For each vertex $T$ there is a canonical  isomorphism
\[ \pi_* Q (ad\left< S_T \right>)  \cong \pi_* Q (ad).\]
\item
The  map
\[{\slashed{\partial}_k}_* :\pi_* Q (ad\left< S_T \right> )\lra  \pi_* Q (ad \left< {\delta^ k S_T}\right> )\]
induced by the face map is given under the above isomorphism by
multiplication by $P_{t_k}$ map on the bordism group $\Omega_*$.
\end{enumerate}
\end{lemma}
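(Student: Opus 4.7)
The plan is to construct an explicit isomorphism of graded categories $\Phi: \A\langle S_T\rangle \to \A$, promote it cellwise to an isomorphism of ad theories, and then read (ii) off directly from the definition of the face functor.

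For (i), the key observation is that an object $M$ of $\A\langle S_T\rangle$ is already determined up to canonical isomorphism by $N := M_{\{0\}}$: the emptiness condition kills $M_\sigma$ whenever $0 \notin \sigma$, and iterating the structure isomorphisms $f_{\sigma, i}$ identifies $M_\sigma$, for each $\sigma \ni 0$, with $N \otimes \bigotimes_{j \in \sigma \setminus \{0\}} P_j$ (factors ordered by the order inherited on $T$). I would therefore set $\Phi(M) = M_{\{0\}}$ and define an inverse $\Psi: \A \to \A\langle S_T\rangle$ by the formula
\[ \Psi(N)_\sigma \;=\; \begin{cases} N \otimes \bigotimes_{j \in \sigma \setminus \{0\}} P_j & \text{if } 0 \in \sigma, \\ \emptyset & \text{otherwise,} \end{cases} \]
with structure maps $f_{\sigma, i}$ the canonical symmetries of $\A$ that slot $P_i$ into its correct position. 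The compatibility diagram of Definition \ref{defsing}(ii) for $\Psi(N)$ is an instance of the hexagon, so it holds by coherence for symmetric monoidal categories. The same coherence shows that for any $M$ the iterated application of the $f_{\sigma, i}$ gives a well-defined isomorphism $M \cong \Psi(\Phi(M))$ independent of the iteration order. Since the ad-theory structure on $\A\langle S_T\rangle$ is imposed cellwise (a $K$-ad is a pread each of whose $M_\sigma$ is a $K$-ad in $\A$), the functors $\Phi$ and $\Psi$ extend cellwise and preserve the ad condition, yielding $\mbox{ad}\langle S_T\rangle \cong \mbox{ad}$.

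For (ii) the computation is then immediate. The face functor sends $M$ to the pread $(\partial_i M)_\sigma = M_{(\sigma, i)}$, and applying $\Phi$ gives
\[ \Phi(\partial_i M) \;=\; (\partial_i M)_{\{0\}} \;=\; M_{\{0, i\}} \;\stackrel{f_{\{0\}, i}}{\cong}\; M_{\{0\}} \otimes P_i \;=\; \Phi(M) \otimes P_i. \]
Hence under the isomorphism of (i), $\partial_i$ corresponds to the functor $N \mapsto N \otimes P_i$. Passing to Quinn spectra this is the module action by the $*$-ad $P_i$ discussed in Section 3, and on bordism classes it is exactly multiplication by $[P_i] \in \Omega_*$.

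The main obstacle is the coherence bookkeeping in (i): checking that the isomorphisms $f_{\sigma, i}$ built into $\Psi(N)$ satisfy the compatibility of Definition \ref{defsing}(ii), and that the iso $M \cong \Psi(\Phi(M))$ is independent of the order of iteration. Both reduce to the coherence theorem for symmetric monoidal categories applied to permutations of the factors $P_j$; the one mildly subtle point is tracking the involution and incidence signs so that $\Phi$ is $\Z/2$-equivariant, but this is automatic from the definition of the symmetry $\gamma$ used to build the $f_{\sigma,i}$.
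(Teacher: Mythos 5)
Your argument is correct and is essentially the content the paper leaves implicit (its stated proof is only ``The proof is clear.''): identify an object of $\A\left<S_T\right>$ with its value $M_{\{0\}}$, reconstruct the remaining $M_\sigma$ by tensoring with the $P_j$ via the structure isomorphisms $f_{\sigma,i}$, and read off from $\Phi(\partial_i M)\cong \Phi(M)\otimes P_i$ that ${\partial_i}_*$ is multiplication by $[P_i]$ on $\Omega_*$. The only cosmetic caveats are that what you produce is an equivalence rather than a strict isomorphism (objects of $\A\left<S_T\right>$ carry the extra data of the chosen $f_{\sigma,i}$), and that $\Psi$ preserves ads because the ambient ad theory is assumed multiplicative; both are harmless for the identification of $\pi_*Q$ and of the face maps used in the spectral sequence.
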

\begin{proof}
The homotopy equivalence is induced by the functors
$$
\phi: \A \left< S_T\right>  \lra \A;  \qquad M  \mapsto   M_{ \{ 0\} } 
$$ and 
$$\psi:  \A   \lra  \A\left< S_T\right>;  \qquad  M \mapsto  
 (M_{ \sigma})_{\sigma \subset \Delta^n}
$$ 
with $ M_\sigma=M\bigotimes_{i\in \sigma \setminus \{ 0\}} P_{t_i}$. The composite $\phi \psi$ is the identity and  $\psi \phi(M)$ is canonically isomorphic to $M$.  By construction, these functors induce maps of ad theories and hence maps between Quinn spectra. Since isomorphic objects are bordant the claim follows. For the second claim observe that the composite
$$ \A   \lra  \A\left< S_T\right>\stackrel{{\slashed{\partial}_k}_*}{\lra}   \A\left< \delta^k S_T\right> \lra \A$$
sends $M$ to $M\otimes P_{t_k}$ by definition.
\end{proof}
The ad theories $\mbox{ad}\left<  S_T \right> $ and the face functors (with the appropriate signs) define a cubical diagram of ad theories which will be denoted by $\mbox{ad}\left<  S \right> $ in the sequel.

\begin{lemma}
Let $Q(ad \left<  S_n\right> )^ +$ be the $n+1$-dimensional diagram  indexed by
subsets $T$ of $\{0,1,\ldots, n\}$ without $\emptyset$ given by
\[ Q(ad \left<  S_n\right> )^ + (T) = Q(ad \left<  S_T\right> )\]
if $ 0\not\in
  T$ and by * else.
Then we have 
\[ \hocolim Q(ad \left<  S_n\right> )^ + \simeq  Q(ad/S_n).\]
\end{lemma}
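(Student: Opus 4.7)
The plan is to construct a comparison map from $\hocolim Q(ad\langle S\rangle)^+$ to $Q(ad/S_n)$ and then show it is a weak equivalence by induction on $n$, matching the cube decomposition against the Bockstein cofiber sequence from the exact sequence theorem.

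First I would build a natural transformation of diagrams from $Q(ad\langle S\rangle)^+$ to the constant diagram at $Q(ad/S_n)$. At vertices $T$ with $0 \in T$ the value is $*$, so the map is the basepoint. At vertices $T$ with $0 \notin T$, the map $Q(ad\langle S_T\rangle) \to Q(ad/S_n)$ is induced by the functor that regards a strict object of $\A\langle S_T\rangle$—whose higher preads are rigid products $M_{(\sigma,i)} = M_\sigma \otimes P_i$—as a flexible object of $\A(S_n)$, retaining only the weaker boundary factorization $\partial_i M_{(\sigma,i)} \cong M_\sigma \otimes P_i$ and extending by $\emptyset$ in the singularity directions indexed by $\{1,\ldots,n\} \setminus T$. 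Compatibility with the face functors $\partial_k(M_\sigma) = M_{(\sigma,k)}$ is immediate from the formulas, yielding a map $\Phi : \hocolim Q(ad\langle S\rangle)^+ \to Q(ad/S_n)$.

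To verify $\Phi$ is an equivalence I would proceed by induction on $n$. The extended $(n+1)$-cube splits along the $n$-direction into two sub-$n$-cubes, both of which can be identified (via the iso $Q(ad\langle S_T\rangle) \cong Q(ad)$ from the preceding lemma) with the extended cube appearing in the lemma for the truncated sequence $S_{n-1}$, and the face functor $\partial_n$ supplies a natural transformation between them. A standard cube-decomposition argument produces a cofiber sequence of hocolims, which by the inductive hypothesis reads $Q(ad/S_{n-1}) \to Q(ad/S_{n-1}) \to \hocolim Q(ad\langle S\rangle)^+$. On the target side, the exact sequence theorem supplies the Bockstein cofiber sequence $Q(ad/S_{n-1}) \xrightarrow{\mu_{P_n}} Q(ad/S_{n-1}) \to Q(ad/S_n)$; naturality of $\Phi$ intertwines the two sequences, and the five lemma applied to the long exact sequences of homotopy groups completes the equivalence.

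The main obstacle is identifying the connecting map of the cube-induced cofiber sequence with the Bockstein map $\mu_{P_n}$. By part (ii) of the preceding lemma the face functor $\partial_n$ acts on each non-trivial entry of the sub-cube as multiplication by $P_n$ on the underlying $Q(ad)$; passing to hocolims and tracking the identifications through Lemma \ref{adjunction}, naturality of $\Phi$ combined with the inductive hypothesis translates this into $\mu_{P_n}$ on $Q(ad/S_{n-1})$. The base case, where the extended cube has dimension two, amounts to a direct pushout calculation identifying the hocolim with the cofiber defining $Q(ad/S_1)$.
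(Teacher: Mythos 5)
Your overall strategy is the same as the paper's: induct on the number of singularities and identify $Q(ad/S_n)$ with the cofiber of $\mu_{P_n}$ on $Q(ad/S_{n-1})$ by means of the exact sequence of the previous section (the paper only writes out the one-singularity case and appeals to induction, comparing the two cofibers on bordism groups). However, two of your steps fail as stated. First, the vertex-wise functor underlying $\Phi$ is not defined. An object of $\A\left<S_T\right>$ is a \emph{single} $\Delta^{|T|}$-pread $M$ together with isomorphisms $M_{(\sigma,i)}\cong M_\sigma\otimes P_i$ between its \emph{values} on cells, whereas an object of $\A(S_n)$ is a system in which each $M_\sigma$ is itself a $\sigma$-pread and the isomorphisms compare the boundary restriction $\partial_i M_{(\sigma,i)}$ with $M_\sigma\otimes P_i$. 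If you restrict the single pread $M$ to cells to get such a system, you find $\partial_i\bigl(M|_{(\sigma,i)}\bigr)=M|_\sigma$, not $M|_\sigma\otimes P_i$, so ``retaining the weaker boundary factorization'' does not typecheck; one would have to construct a genuinely thickened object. This is precisely why the paper does not map out of the hocolim vertex-wise but instead compares both sides with the cofiber of the multiplication map, using the previously constructed map $\pi\colon ad\to ad/S_1$ and the exact sequence.

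Second, the inductive decomposition is not correct as claimed: splitting the punctured $(n+1)$-cube along the direction $n$ yields the punctured $n$-cube (subsets avoiding $n$) and a \emph{full} $n$-cube (subsets containing $n$, which includes the extra vertex $\{n\}$), so the two halves are not both copies of the diagram for $S_{n-1}$, and the cofiber sequence $Q(ad/S_{n-1})\to Q(ad/S_{n-1})\to \hocolim Q(ad\left<S\right>)^+$ does not follow from the inductive hypothesis in the way you assert. Relatedly, you took the placement of the cone points at face value ($*$ at vertices containing $0$); with that reading and the face maps $\partial_k$ as the structure maps, the base case already fails: for $n=1$ the only nontrivial vertex is $Q(ad\left<S_{\{1\}}\right>)$ and the other two are $*$, so the pushout is $Q(ad)$ rather than the cofiber of $\mu_{P_1}$. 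The paper's base case (``the left hand side is the cokernel of the map induced by $\partial_1$'') requires both the source and the target of $\partial_1$, i.e.\ $Q(ad\left<S_{\{1\}}\right>)$ and $Q(ad\left<S_\emptyset\right>)\cong Q(ad)$, to appear in the diagram, with the cone points at the complementary vertices. You need to pin down this indexing and the direction of the arrows, and then deal with the extra vertex in the ``contains $n$'' half (which becomes either a cone point or a terminal vertex), before the comparison with the Bockstein sequence and the five lemma can be run as you propose.
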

 \begin{proof}
 The proof is an induction on the number of singularities. In the case of only one singularity the left hand side is the  cokernel of the map of Quinn spectra induced by $\slashed{\partial}_1$. This map is given by the multiplication by $P_1$.  On the other hand the map which considers an ad over $\A\left< \emptyset \right> = \A$ as an object of $\A (S_1)$ can be composed with the multiplication by $P_1$ map. The cokernel of the induced map of spectra  has the same homotopy type as $Q(ad/S)$ by the exact sequence of Theorem \ref{exact}.  Hence we get a map between the cokernels which induces an isomorphism on bordism groups. \par
The inductive step follows since we have canonical equivalences
\begin{eqnarray*}
(\mathcal{A} (S_n))(P_{n+1})& \cong & \mathcal{A}(S_{n+1}) \\
(\mathcal{A} \left< S_n\right>)\left< P_{n+1}\right> & \cong & \mathcal{A}\left< S_{n+1}\right>.
\end{eqnarray*}
(Note that the multiplication with $*$-ads is still defined (compare \ref{module} below) even though the categories may not be monoidal.) 
 \end{proof}
 The homotopy colimit identification of the spectra with singularities
 furnishes a spectral sequence of Bousfield-Kan type \cite{MR0365573}  with $E_2$-term the homology of the chain complex
$$ \xymatrix{ \ldots \ar[r]& \bigoplus_{\# T=k} \pi_* Q( ad \left< S_T
   \right >)\ar[r]^{\slashed{\partial} }&  \bigoplus_{\# T=k-1} \pi_* Q( ad \left< S_T \right >)\ar[r]& \ldots .}
  $$ 
with $\slashed{\partial} = \sum (-1)^k\slashed{\partial}_k$. This gives
 \begin{thm}
 There is a spectral sequence converging to the bordism groups of $\mbox{ad}/S_n$ with $E_2$-term  the homology of the  Koszul complex $K(P_1,\ldots ,P_n)$, that is, the tensor product over $\Omega_* $ of the complexes
 $$ \xymatrix{0 \ar[r] &\Omega_* \ar[r]^{\cdot P_k}\ar[r]&\Omega_* \ar[r]^{}\ar[r]&0}.$$
 \end{thm}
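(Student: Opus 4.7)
The plan is to invoke the Bousfield-Kan spectral sequence of \cite{MR0365573} for the $(n+1)$-dimensional punctured cubical diagram $Q(ad\langle S\rangle)^+$, whose homotopy colimit has already been identified with $Q(ad/S_n)$ in the preceding lemma. For a cubical diagram of spectra this spectral sequence has $E_1$-term given by summing the homotopy groups of the vertices in each cubical degree, with $d_1$ the alternating sum of face maps; after taking homology one obtains the $E_2$-term announced in the paragraph preceding the theorem. Thus the whole argument reduces to identifying the chain complex
\[
\ldots \lra \bigoplus_{\#T=k} \pi_* Q(ad\langle S_T \rangle) \xrightarrow{\partial} \bigoplus_{\#T=k-1} \pi_* Q(ad\langle S_T \rangle) \lra \ldots
\]
with $\partial=\sum (-1)^i \partial_i$ as the Koszul complex $K(P_1,\ldots,P_n)$.

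First, since the vertices with $0\in T$ are contractible in $Q(ad\langle S\rangle)^+$, only subsets $T\subseteq\{1,\ldots,n\}$ contribute, so that the sum in cubical degree $k$ runs over $k$-element subsets of $\{1,\ldots,n\}$. Next, part (i) of the preceding lemma gives a natural identification $\pi_*Q(ad\langle S_T\rangle)\cong \Omega_*$ for each such $T$, producing the underlying module $\bigoplus_{T\subseteq\{1,\ldots,n\},\ |T|=k}\Omega_*$ of the Koszul complex in degree $k$. Finally, part (ii) of the lemma shows that each face functor $\partial_i$, which sends $S_T$ to $\delta^iS_T=S_{T\setminus\{i\}}$, induces multiplication by $P_i$ on $\Omega_*$, so that the alternating sum $\sum(-1)^i\partial_i$ matches the Koszul differential on the exterior-algebra description $K(P_1,\ldots,P_n)=\bigotimes_{i=1}^n (\Omega_* \xrightarrow{\cdot P_i} \Omega_*)$.

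The main obstacle is essentially bookkeeping: one must check that the signs appearing in the Bousfield-Kan differential (which come from an ordering of the vertices of the cube) agree, under the natural ordering $1<2<\cdots<n$ of $T$, with the signs that appear in the Koszul differential coming from the antisymmetry of the exterior algebra. This is routine once a convention is fixed, and does not affect the resulting homology. With this identification in place, the announced spectral sequence with the desired $E_2$-term and abutment is obtained.
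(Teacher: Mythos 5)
Your proposal is correct and follows essentially the same route as the paper: the paper likewise deduces the theorem from the Bousfield--Kan spectral sequence of the punctured cubical diagram, using the homotopy colimit identification with $Q(ad/S)$ and the lemma identifying the vertices with $\Omega_*$ and the face maps with multiplication by the $P_i$, so that the $E_2$-term becomes the Koszul complex. Your added remark about matching the cubical and Koszul sign conventions is a minor bookkeeping point the paper leaves implicit.
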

 
 \section{Product structures}
In this section we will investigate product structures on ad theories with singularities.
We ask which multiplicative structures are
inherited from an ad theory to its $S_n$-ad theory $\mbox{ad}/S_n$. 

Suppose the ad theory is multiplicative. Then  clearly 
we have an action 
\[ \mbox{ad}(K) \times (\mbox{ad}(L)/S_n) \lra \mbox{ad}(K\times L)/S_n \]
and hence, we obtain a module structure on the Quinn spectra. 
\begin{cor}\label{module}
The Quinn spectrum of the ad theory with
singularities is a strict module spectrum
over the original Quinn spectrum.
\end{cor}
Further product structures come from the following external product:
we start with a multiplicative ad theory and finite sequences $P=(P_1,P_2, \ldots, ,P_n)$ and $Q=(Q_1,Q_2,\ldots,Q_m)$. Write $(P,Q)$ for the  sequence
$$ (P_1,P_2, \ldots, ,P_n,Q_1,Q_2,\ldots,Q_m).$$
\begin{prop}
There is an external product
$$\times: ad/P(K) \times ad/Q(L) \lra ad/(P,Q)(K\times L)$$
which is natural and associative.
\end{prop}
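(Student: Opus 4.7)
I would define $(M\times N)$ cell-wise. For a cell $\kappa\times\lambda$ of $K\times L$ and a cell $\rho$ of $\Delta^{n+m}$, set $(M\times N)(\kappa\times\lambda)_\rho=\emptyset$ if $0\notin\rho$; otherwise write uniquely $\rho=\{0\}\cup A\cup(B+n)$ with $A\subset\{1,\ldots,n\}$, $B\subset\{1,\ldots,m\}$ and put
\[ (M\times N)(\kappa\times\lambda)_\rho \;:=\; M(\kappa)_{\{0\}\cup A}\otimes N(\lambda)_{\{0\}\cup B}. \]
The boundary isomorphisms $f_{\rho,i}$ will come from tensoring those of $M$ (for $i\le n$) or $N$ (for $i>n$) with the complementary factor together with one symmetry swap of $\otimes$, and the compatibility square of Definition~\ref{defsing} will commute pointwise: for two $P$- or two $Q$-indices it reduces to the corresponding square for $M$ or $N$, and for mixed indices it reduces to coherence of the braiding.

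\textbf{Ad condition.} Next I would verify that for every cell $\rho=\{0\}\cup A\cup(B+n)$ of $\Delta^{n+m}$, writing $\sigma=\{0\}\cup A$ and $\tau=\{0\}\cup B$, the adjoint of $M\times N$ is a $(K\times L\times\rho)$-ad in $\A$. The hypotheses on $M$ and $N$ give a $(K\times\sigma)$-ad and an $(L\times\tau)$-ad in $\A$, and I would combine them by the multiplicative structure together with one application of the stability axiom (swapping $\sigma$ past $L$) into a $(K\times L\times\sigma\times\tau)$-ad whose value at $\kappa\times\lambda\times\sigma'\times\tau'$ is $M(\kappa)_{\sigma'}\otimes N(\lambda)_{\tau'}$, vanishing unless $0\in\sigma'$ and $0\in\tau'$. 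The assignment $\{0\}\cup A'\cup(B'+n)\leftrightarrow(\{0\}\cup A',\{0\}\cup B')$ is a dimension-preserving equivalence of graded cellular pairs
\[ \mathcal{C}ell(K\times L\times\rho,\,K\times L\times\partial_0\rho)\;\stackrel{\cong}{\lra}\;\mathcal{C}ell\bigl(K\times L\times\sigma\times\tau,\,K\times L\times(\partial_0\sigma\times\tau\cup\sigma\times\partial_0\tau)\bigr), \]
and comparing the simplicial boundary of $\Delta^{a+b}$ (with vertex ordering $v_0,v_1,\ldots,v_a,w_1,\ldots,w_b$) against the product formula $\partial(\sigma\times\tau)=\partial\sigma\times\tau+(-1)^{\dim\sigma}\sigma\times\partial\tau$ will show that this equivalence preserves all incidence numbers. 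The stability axiom then pulls the product ad back to the required $(K\times L\times\rho)$-ad.

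\textbf{Naturality and associativity.} Naturality in $K$ and $L$ will be immediate from the cell-wise definition. For associativity, the two bracketings of a triple product both assign to each cell the same triple tensor product determined by the canonical three-part partition of the vertex set, so they will agree by associativity of $\otimes$ in $\A$ and of the multiplicative $\A$-ad theory.

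\textbf{Main obstacle.} The real work is in the ad condition: the bridge between the product cell structure on $\sigma\times\tau$ and the simplicial structure on $\rho=\Delta^{a+b}$ can only be crossed via the stability axiom, and the key claim is that the bijection above preserves incidence numbers. The sign check is routine but careful, and it is the one place where orientation conventions enter essentially; everything else is formal diagram chasing with the symmetric monoidal structure of $\A$.
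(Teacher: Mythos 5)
Your construction is exactly the paper's: the cell-wise formula $(M\times N)_\rho = M_{\rho_0}\otimes N_{\rho_1}$ with $\rho_0=\{0\}\cup A$, $\rho_1=\{0\}\cup B$ is precisely the splitting $\rho_0=\rho\cap\{0,\ldots,n\}$, $\rho_1=((\rho\cap\{n+1,\ldots,n+m\})-n)\cup(\rho\cap\{0\})$ used there, so this is the same approach. The paper leaves the verification as "readily verified," and your sketch (comparing $\mathcal{C}ell(\rho,\partial_0\rho)$ with the product cell structure on $\sigma\times\tau$, checking incidence numbers, and invoking multiplicativity plus the stability axiom) correctly supplies the details it omits.
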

\begin{proof}
Suppose $M$ is a $K$-ad$/(P)$ and $N$ is an $L$-ad$/(Q)$. For  a subset $\rho$ of $$\{ 0,1,\ldots ,n+m\}$$ set
$$\rho_0=\rho \cap \{0,1,\ldots n\}$$
and 
$$\rho_1=((\rho \cap \{  n+1, n+2, \ldots ,n+m\} )-n)\cup (\rho \cap \{ 0\}).$$
Then the external product is given by
$$  (M\times N)_\rho = (-1)^{\sum_{s\in \rho_0,t\in \rho_1}d(P_s)d(P_t)}M_{\rho_0 } \times N_{\rho_1}.$$
The claimed properties are readily verified.
\end{proof}
Internal product structures are much harder to construct. In \cite{MR546788} an internal product on the level of homotopy groups was obtained with the help of a retraction map which reduces the singularity of type $(P,P)$ to $P$ under suitable hypothesis. In order to rigidify the products we will proceed differently. Instead of looking for a retraction map we construct a new ad theory which comes with an internal  product and is homotopy equivalent to the old one in good cases.
\begin{definition}
Let $\tau \in \Sigma_n$ be a permutation and $P=(P_1,P_2,\ldots ,P_n)$ be a sequence. Let $\tau P$ be the sequence $(P_{\tau_1},\ldots,P_{\tau_n})$ and let $$
\tau^* : \A (P) \lra \A(\tau P)$$
be the map which sends an object $M_\cdot$ to the object 
$$ (\tau^*M_\cdot)_\sigma= (-1)^{|\tau,\sigma|} M_{\tau \sigma}.$$
Here, $|\tau ,\sigma |$ is the determinant of the matrix which is obtained by permuting the $d(P_i)$-dimensional blocks of a $\sum_{i\in \sigma} d(P_i)$-dimensional identity matrix  through $\tau$.  
\end{definition}
\begin{lemma} The functor $\tau^*$ induces an isomorphism of ad theories
$$\tau^* : ad/P \lra ad/(\tau P).$$
\end{lemma}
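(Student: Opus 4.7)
The plan is to define $\tau^*$ explicitly on objects and morphisms, verify that its image lands in $ad/(\tau P)$, and exhibit $(\tau^{-1})^*$ as a two-sided inverse. Write $\tilde\tau$ for the extension of $\tau$ to $\{0,1,\ldots,n\}$ that fixes $0$. For an object $M=(M_\sigma,f_{\sigma,i})$ of $\A(P)$ and a cell $\rho\subset\{0,1,\ldots,n\}$, set
\[(\tau^*M)_\rho \;=\; M_{\tilde\tau(\rho)},\]
with the orientation adjusted by the sign with which $\tilde\tau$ permutes the vertices of $\rho$. The transition isomorphism in degree $j\notin\rho$ is defined to be the composite
\[\partial_j(\tau^*M)_{(\rho,j)} \;=\; \partial_{\tau(j)}M_{\tilde\tau(\rho)\cup\{\tau(j)\}} \;\xrightarrow{f_{\tilde\tau(\rho),\tau(j)}}\; M_{\tilde\tau(\rho)}\otimes P_{\tau(j)} \;=\; (\tau^*M)_\rho\otimes P_{(\tau P)_j},\]
which is exactly the kind of isomorphism required by Definition \ref{defsing}(ii) for $\tau P$.

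Next I would verify that $\tau^* M$ really is an object of $\A(\tau P)$. The condition $(\tau^*M)_\rho=\emptyset$ for $0\notin\rho$ is immediate from $\tilde\tau(0)=0$. For the hexagonal compatibility diagram, pick $i,j\notin\rho$ and notice that the hexagon for $\tau^*M$ at $(\rho;i,j)$ becomes, under the identifications above, exactly the hexagon for $M$ at $(\tilde\tau(\rho);\tau(i),\tau(j))$, with the braiding $\gamma\colon P_{\tau(j)}\otimes P_{\tau(i)}\to P_{\tau(i)}\otimes P_{\tau(j)}$ inherited from the symmetric monoidal structure on $\A$. Since the hexagon commutes for $M$ by assumption, it commutes for $\tau^*M$.

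To extend from objects to $K$-preads and $K$-ads, one observes that applying $\tau^*$ cellwise defines a map of preads $\mathrm{pre}/P(K)\to\mathrm{pre}/(\tau P)(K)$. The crucial point is that the underlying shuffle on cells of $K\times\Delta^n$ induced by $\mathrm{id}_K\times\tilde\tau$ is a $0$-isomorphism of cellular categories that preserves all incidence numbers, once one incorporates the orientation signs built into the definition of $\tau^*$. Hence the stability axiom for the ambient ad theory on $\A$ guarantees that if each $M_\sigma$ is a $(K\times\sigma)$-ad, then each $(\tau^*M)_\rho$ is a $(K\times\rho)$-ad; that is, $\tau^*$ restricts to a map $ad/P(K)\to ad/(\tau P)(K)$, natural in $K$.

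Finally, $(\tau^{-1})^*$ applied after $\tau^*$ returns $(M_\sigma, f_{\sigma,i})$ up to the orientation signs, which cancel because the two applications contribute reciprocal signs. Thus $\tau^*$ is an isomorphism of ad theories. The main bookkeeping obstacle is the orientation signs entering the stability axiom in Step~3; once these are handled consistently with the sign convention in the definition of $\tau^*$, everything else is formal.
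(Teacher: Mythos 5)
Your proof is correct and takes essentially the same route as the paper: the paper's entire argument is ``a consequence of the stability axiom,'' and your key step --- transporting orientations along $\mathrm{id}_K\times\tilde\tau$ so that all incidence numbers are preserved and then invoking stability to conclude that ads map to ads, with $(\tau^{-1})^*$ as inverse --- is exactly that argument, spelled out. The remaining checks (the transition isomorphisms and the compatibility diagram for $\A(\tau P)$) are the routine details the paper leaves implicit.
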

\begin{proof}The proof is a consequence of the stability axiom.
\end{proof}
\par
Consider the self map
$$ 1+(-1)^n\tau_{n,n}: ad/(P,P) \lra ad/(P,P) $$
where $\tau_{n,n}$ is the permutation which twists the two blocks of length $n$. An ad in the image of this map has the property that for each oriented cell of the ball complex we have a $\Delta^{2n}$-ad whose $k$th face for $k\leq n$ coincides with the $k+n$ th face after permuting the two summands and applying $(-1)^n$. Moreover, the object on the top cell is twice the original object.
   
\begin{definition}
Let $P$ be an arbitrary  sequence. An $K$-ad of $\mbox{ad}/(P,P)$ is said to be {\it close to} a $K$-$\mbox{ad}/P$ if  
for each oriented cell of $K$ the value $M$ satisfies:
$$\partial_k M = (-1)^n \partial_{k+n} M\mbox{ for all } 0\leq k\leq n$$
and the same holds for the maps induced by the inclusions into the top
cell. In other words, $M$ is fixed under the action of $(-1)^ n
\tau_{n,n}$. We write $cl(ad/P)$ for all ads in $\mbox{ad}/(P,P)$ which are
isomorphic to ones which are close to $\mbox{ad}/P$.  We say that $\mbox{ad}/P$ is {\it well behaved} if each of the theories $cl(ad/(P), cl(ad/(P,P)), \ldots$ is an ad theory.
\end{definition}

\begin{definition}
Let $\pi: ad/P\lra ad/(P,P)$ be the inclusion map considered in section 3. It comes from the map $\A(P)\lra \A(P,P)$ which fills  $\emptyset$ in the faces which do not contain the last $n$ indices. Set
$$ \rho_P = (1+(-1)^n \tau_{n,n} )\pi$$ and let $\mbox{ad}/\!/P$ be the colimit of the sequence
$$ \xymatrix{ad/P \ar[rr]^{\rho_P} && cl(ad/P)\ar[rr]^{\rho_{(P,P)} }&& cl(ad/(P,P))\ar[rr]^{\; \; \ \; \; \rho_{((P,P),(P,P))} }&&\ldots}$$

\end{definition}

\begin{thm}\label{einfty}
Let $\mbox{ad}/P$ be well behaved. Then we have 
\begin{enumerate}
\item
$\mbox{ad}/\!/P$ is a commutative multiplicative ad theory. 
\item
the canonical map from $\mbox{ad}$ to  $\mbox{ad}/\!/P$ respects the multiplication.
\item
the canonical map from the spectrum $Q(ad/P)$ to $Q(ad/\!/P)$ is a
homotopy equivalence if 2 is inverted, $P$ is regular and the cylinder of $P$ admits an involution reversing isomorphism.
\end{enumerate}
\end{thm}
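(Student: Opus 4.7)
My plan is to handle the three claims in order, with (iii) being the main difficulty.

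For part (i), I first observe that a sequential colimit of ad theories along inclusions is again an ad theory: each of the axioms (pointed, full, local, gluing, cylinder, stable) asserts the presence of certain elements in $\mbox{pre}^k(K,L)$, a condition that passes through filtered colimits of subfunctors. Well-behavedness of $ad/P$ thus guarantees each term in the defining sequence is an ad theory, and so is the colimit $ad/\!/P$. For the multiplication, I combine the external product proposition with the reindexing isomorphisms $\tau^*$ of the earlier lemma: given $M$ at the $k$-th stage and $N$ at the $\ell$-th stage, the external product gives an ad indexed by a concatenated sequence of singularities, which $\tau^*$ reorders into the ad theory at the $(k+\ell)$-th stage. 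Associativity and unitality are inherited from the external product. The essential new ingredient is commutativity: the symmetrization built into $\rho$ precisely identifies $M\times N$ with the block-swapped $N\times M$ after one further application, so the colimit product is commutative.

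For part (ii), the canonical map $ad\to ad/\!/P$ is the composition of the inclusion sending an ad to the same ad with all singularity data equal to $\emptyset$ with the structure map into the colimit. The external product on $ad$ is compatible with $\pi$ (filling in $\emptyset$ on the new singularity faces) because $\emptyset$ is a strict unit for the multiplicative bipermutative structure; left and right distributivity then give compatibility with the additive symmetrization $(1+i^n\tau_{n,n})$. Hence the map respects the external products at every stage and so respects multiplication in the colimit.

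Part (iii) is the heart of the theorem. My plan is to show that after inverting $2$ each structural map $\rho$ in the defining colimit is a weak equivalence, so $Q(ad/P)[\tfrac12]\simeq Q(ad/\!/P)[\tfrac12]$. Inverting $2$ makes $e=(1+i^n\tau_{n,n})/2$ a homotopy idempotent on $Q(ad/(P,P))$ whose image is precisely $Q(cl(ad/P))$, so that after inverting $2$ the homotopy groups of $cl(ad/P)$ are the $i^n\tau_{n,n}$-fixed part of those of $ad/(P,P)$. The task reduces to identifying this fixed part with $\pi_*Q(ad/P)[\tfrac12]$ via the natural map $\pi$. Regularity of $P$ guarantees that the Bousfield-Kan spectral sequence of the previous section is controlled, so I can compute $\pi_*Q(ad/(P,P))$ via a Koszul-type tensor square and identify the image of $\pi$ within it. The involution-reversing isomorphism of the cylinder of $P$ then lets me determine the action of the block-swap $\tau_{n,n}$ on this description: it acts as $(-1)^n$, equivalently as $i^n$, on the image of $\pi$ and with the opposite sign on the complement. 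Thus $(1+i^n\tau_{n,n})/2$ projects onto exactly the image of $\pi$, as required. The same argument, applied at each stage of the colimit, produces compatible equivalences, and passing to the colimit finishes the proof.

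The main obstacle is the sign calculation at the end: explicitly identifying the action of $\tau_{n,n}$ on the Koszul-theoretic description of $\pi_*Q(ad/(P,P))$ and verifying that the involution-reversing isomorphism of the cylinder of $P$ cancels the sign introduced by block-swap, so that $i^n\tau_{n,n}$ indeed acts as $+1$ on the image of $\pi$. This is the only step where all three hypotheses (2 inverted, regularity of $P$, and the cylinder involution) are genuinely used, and each plays an essential role.
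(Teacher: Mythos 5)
Your strategy for (iii) is in outline the paper's own: show each map $\rho$ in the defining colimit becomes a weak equivalence after inverting $2$, compute $\pi_*Q(ad/(P,P))$ from the exact sequences attached to the singularity construction, and use the hypothesis on the cylinder of $P$ to control the block swap. But at the decisive point you assert rather than prove what is needed, and you say so yourself ("the main obstacle is the sign calculation"). The paper closes exactly this gap with a concrete construction: regularity gives the first short exact sequence, the fact that the obstruction to the vanishing of multiplication by $P$ on $\Omega^P_*$ is the bordism class of the mapping torus of $P$ gives the second, so $\Omega^{(P,P)}_*$ is a free $\Omega^P_*$-module on $1$ and a class $\delta$ of dimension $m+1$; and then $\delta$ is chosen to be the suspension of $P$ (the cylinder on the top cell with $P$ as both faces), so that the hypothesized involution-reversing isomorphism of the cylinder yields $i\tau_{1,1}\delta\cong i\delta$ and hence $(1+i\tau_{1,1})\delta=0$ in bordism. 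That explicit representative is the missing idea in your argument: your claim that the swap acts as $i^n$ on the image of $\pi$ and "with the opposite sign on the complement" is precisely the statement requiring proof; it is not a formal consequence of a Koszul-type description (the swap does not even visibly preserve the image of $\pi$), and nothing in your sketch produces a class on which the cylinder hypothesis can be brought to bear. Note also that your Koszul computation of $\pi_*Q(ad/(P,P))$ already needs the mapping-torus input — regularity alone does not give the second exact sequence — which you do not address; the paper needs it again to iterate the argument along the colimit.

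A secondary issue concerns (i): your product, "external product followed by $\tau^*$, landing at stage $k+\ell$," does not match the colimit, whose terms are $ad/P \to cl(ad/P)\to cl(ad/(P,P))\to\cdots$ with the singularity sequence doubling at each step; more importantly, an unsymmetrized external product does not lie in the $cl$ subtheories at all. The paper's product is the symmetrized external product $(1+i^n\tau_{n,n})(M\times N)$ of two representatives pushed to a common stage, which lands one stage further along and is independent of the stage by the bipermutative hypotheses; in your sketch the symmetrization appears only in a commutativity claim that the theorem does not assert. This part is repairable, but as written the multiplication is not well defined on $ad/\!/P$.
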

\begin{proof}
The product of two ads $M,N$ of the colimit, say in $cl(ad/(P,\ldots ,P))$ is given by their 
symmetrized exterior product  $(1+(-1)^n \tau_{n,n} )(M\times N)$. This definition is independent of $n$ by the hypothesis on $\A$.
Clearly, the product is compatible with the map from $\mbox{ad}$.
\par
The last assertion is more involved.  It relies on arguments which are similar to the ones given in  \cite{MR546788} for complex bordism. For simplicity we look at the case of only one singularity $P$ of dimension $m$. We have short exact sequences
$$  \xymatrix{ 0 \ar[r] & \Omega_{*-m} \ar[r]^P & \Omega _* \ar[r]& \Omega^P_* \ar[r] & 0\\ 0 \ar[r] & \Omega_{*}^P \ar[r] & \Omega _*^{(P,P)} \ar[r]& \Omega^P_{*-m-1} \ar[r] & 0.}$$
In particular, $ \Omega _*^{(P,P)}$ is a free $\Omega^P_*\cong
(\Omega/P)_*$-module on the generator 1 and a generator $\delta$ of dimension
$m+1$. (We used here the fact that the obstruction for the vanishing
of the 
multiplication by $P$ map in a theory with singularities which contain
$P$ can be
described by the bordism class of the mapping torus of $P$, see
\cite{MR0377856} for the classical case).\par
  A convenient choice of
$\delta $ is provided by the the suspension of $P$, that is the
cylinder of $\delta$ on the top cell and with $P$ as first and second
face. It maps to the unit of $\Omega_*^ P$. Since the cylinder of $P$
admits an involution reversing isomorphism we see that $-\tau_{1,1}
\delta$  is isomorphic to $i \delta$. Hence, $1-\tau_{1,1}$
annihilates $\delta$ in the bordism group. 
\par
Hence, the map
$$ Q ( ad/P) \lra Q (cl(ad/P))$$
is a weak equivalence. An inverse on the level of homotopy groups is given by
$$ \pi_* Q ( cl (ad/P)) \lra \pi_* Q (  ad/(P,P)) \lra \pi_* Q( ad/P),$$
the last map being induced by  $(1-\tau_{1,1})/2$.
\par
The same method applies to the other maps of the colimit. Note that the
obstructions for the vanishing of the multiplication by $P$-map vanish
and hence we get short exact sequences and can proceed as before. The
general case for arbitrary many singularities is analogues.
\end{proof}
\begin{cor}
Under the conditions of Theorem \ref{einfty}(iii) the Quinn spectrum $Q(\mbox{ad}/P)$ is homotopy equivalent to a commutative ring spectrum.
\end{cor}
\begin{proof}
This immediately follows from Theorem \ref{einfty} and Theorem \ref{Mainold}.
\end{proof}
 \bibliographystyle{amsalpha}
\bibliography{bordmult} 
\end{document}